\DeclarePairedDelimiter{\abs}{\lvert}{\rvert}
\DeclareMathOperator{\dist}{dist}
\renewcommand{\phi}{\varphi}
\renewcommand{\epsilon}{\varepsilon}
\newcommand{\numberset}{\mathbb}
\newcommand{\eps}{\varepsilon}
\newcommand{\e}{\varepsilon}
\newcommand{\N}{\numberset{N}}
\newcommand{\C}{\numberset{C}}
\newcommand{\R}{\numberset{R}}
\newcommand{\norm}[1]{\left\|#1\right\|}
\newcommand{\wto}{\rightharpoonup}
\newcommand{\curv}{\mathfrak{K}}
\newcommand{\tangvet}{\textsl{t}}
\newcommand{\orig}{\mathbf{0}}
\newcommand{\la}{\lambda}
\newcommand{\x}{\xi}
\newcommand{\y}{\eta}
\theoremstyle{definition}
\newtheorem{definition}{Definition}[section]
\theoremstyle{definition}                                                                         
\newtheorem{definizione}{Definizione}[section]
\theoremstyle{definition}                                                                         
\newtheorem{rmk}[definizione]{Remark}
\theoremstyle{plain}                                                                              
\newtheorem{thm}[definizione]{Theorem}
\theoremstyle{plain}    
\newtheorem{prop}[definizione]{Proposition}
\theoremstyle{plain}     
\theoremstyle{plain}
\theoremstyle{definition}
\numberwithin{equation}{section}
\begin{document}
\title[On the shape of solutions to elliptic equations]{On the shape of solutions to elliptic equations in possibly non convex planar domains}
\thanks{This work was supported by INDAM-GNAMPA}

\author{Luca Battaglia, Fabio De Regibus and Massimo Grossi}
\address[Luca Battaglia]{Dipartimento di Matematica e Fisica, Universit\`a degli Studi Roma Tre, Largo S. Leonardo Murialdo 1 - 00146 Roma, Italy. \newline e-mail: {\sf lbattaglia@mat.uniroma3.it}.}
\address[Fabio De Regibus]{Dipartimento di Matematica, Universit\`a degli Studi di Torino, Via Carlo Alberto 10 - 10123 Torino, Italy. \newline e-mail: {\sf fabio.deregibus@unito.it}.}
\address[Massimo Grossi]{Dipartimento di Scienze di Base e Applicate per l'Ingegneria, Universit\`a di Roma ``La Sapienza'', Via Antonio Scarpa 14 - 00161 Roma, Italy. \newline e-mail: {\sf massimo.grossi@uniroma1.it}.}

\begin{abstract}
In this note we prove uniqueness of the critical point for positive solutions of elliptic problems in bounded planar domains: we first examine the Poisson problem $-\Delta u=f(x,y)$ finding a geometric condition involving the curvature of the boundary and the normal derivative of $f$ on the boundary to ensure uniqueness of the critical point. In the second part we consider stable solutions of the nonlinear problem $-\Delta u =f(u)$ in perturbation of convex domains.
\end{abstract}

\maketitle

\section{Introduction and main results}

In this note we are interested in investigating the number of critical points of solutions of elliptic equations. Let $\Omega\subseteq\R^2$ be a smooth and bounded domain and consider the following problem
\[
\begin{cases}
-\Delta u=f\left((x,y),u\right)&\text{in }\Omega\\
u>0&\text{in }\Omega\\
u=0&\text{on }\partial\Omega,
\end{cases}
\]
where $f:\Omega\times\R\to\R$ is a smooth function.
It is known that the number of critical points of solutions of the previous problem is strongly influenced by the function $f$ and by the geometry of the domain $\Omega$. In this context the literature is too wide to report all the result, so we just focus on some of them which are closest to the interest of this paper (see~\cite{Ma16,gr21} for a recent survey on the topic in the case $f=f(u)$ and the monograph~\cite{KawBook}).

The first fundamental result has been proved for the solution of the torsion problem, i.e. $f\equiv1$ by Makar-Limanov~\cite{ML71}: uniqueness of critical point holds if $\Omega$ is convex. The same is true for the first eigenfunction, $f(u)=\lambda u$, see~\cite{bl,app}.

Under symmetry assumptions the well known results of Gidas, Ni and Nirenberg~\cite{gnn} gives the uniqueness of the critical point for general $f$.

Uniqueness of critical point can also be proved for solutions of the Schrödinger eigenvalue problem
\[
-\Delta u+V(x)u=\lambda u,
\]
with $V$ convex, $\lambda>0$. It follows from the results in~\cite{Ko83,CS82}, still in convex domains (their result holds also for more general equations).

If we consider the Poisson problem $-\Delta u= f(x,y)$, as a consequence of a result by Kennington in~\cite{Ken85} one can prove uniqueness of the critical point if $\Omega$ is convex and if $f$ is $\beta$-concave, i.e. $f^\beta$ is concave, for some $\beta>1$.

The last result we want to recall is due to Cabré and Chanillo~\cite{cc}. They show that if the curvature of $\partial\Omega$ is striclty positive and $u$ is a semi-stable solution of the nonlinear problem $-\Delta u=f(u)$, then $u$ has exactly one critical point (the result has been extended to the case of non negative curvature in~\cite{dgm}). Remember that $u$ is said to be a \emph{(semi-)stable} solution if the linearized operator at $u$ is (nonnegative) positive definite, i.e. if for all $\phi\in\mathcal C^{\infty}_{0}(\Omega)$ one has
\[
\int_{\Omega}|\nabla\phi|^{2}-\int_{\Omega} f'(u)|\phi|^{2}>(\ge)0,
\]
or equivalently if the first eigenvalue of the linearized operator $-\Delta-f'(u)$ in $\Omega$ is (nonnegative) positive.

All the results mentioned before hold in convex domain and it is known that, in general, we cannot expect uniqueness of the critical point in non convex domains, as showed by the case of the torsion problem in a dumbell domain (see for instance \cite{sperb}). Then it is natural to ask whether it is possible to recover the uniqueness in (possibly) non convex domain, under suitable assumptions.

In the first part of this paper we examine the Poisson problem
\begin{equation}
\label{PbPoisson}
\begin{cases}
-\Delta u=f(x,y)&\text{in }\Omega\\
u=0&\text{on }\partial\Omega,
\end{cases}
\end{equation}
where $f:\overline\Omega\to\R$ is a smooth function and $\Omega\subseteq\R^2$ is a smooth, bounded (and simply connected) domain. We have the following result.
\begin{thm}
\label{mainthm}
Assume $f>0$ in $\overline\Omega$ and
\begin{equation}
\label{hp2}
\Delta(\log f)=0,\quad\text{in }\Omega,
\end{equation}
and
\begin{equation}
\label{hp3}
\frac{1}{2}\frac{\partial f}{\partial\nu}+\curv f\ge0,\quad\text{on }\partial\Omega,
\end{equation}
where $\nu$ is the outnormal unit vector to $\partial\Omega$ and $\curv$ is its curvature.
If $u$ is the solution of problem~\ref{PbPoisson}, then it has a unique nondegenerate critical point $(x_0,y_0)\in\Omega$.
\end{thm}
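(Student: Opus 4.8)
\emph{Proof proposal.} The plan is to exploit the conformal invariance encoded in hypothesis~\ref{hp2} to reduce the problem to the torsion problem on a convex domain, where uniqueness is classical. Since $\Omega$ is simply connected and $\Delta(\log f)=0$, the function $\tfrac12\log f$ is harmonic, hence the real part of a holomorphic function on $\Omega$; exponentiating, there is a nonvanishing holomorphic $G$ on $\Omega$ (smooth up to $\partial\Omega$, as $f>0$ on $\overline\Omega$) with $|G|^2=f$. Let $\Psi$ be a holomorphic primitive of $G$, so that $\Psi'=G\neq0$ and $\Psi$ is a locally conformal immersion of $\overline\Omega$. Writing $w=\Psi(z)$ and $u(z)=\tilde u(w)$, the conformal change gives $\Delta_z u=|\Psi'|^2\,\Delta_w\tilde u=f\,\Delta_w\tilde u$, so that~\ref{PbPoisson} becomes the torsion problem $-\Delta_w\tilde u=1$ in $\tilde\Omega:=\Psi(\Omega)$, with $\tilde u=0$ on $\partial\tilde\Omega$. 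If I can show that $\Psi$ is a global diffeomorphism onto a \emph{convex} domain $\tilde\Omega$, then the result of Makar-Limanov~\cite{ML71} yields that $\tilde u$ has a unique, nondegenerate critical point; since $\Psi'\neq0$, critical points correspond bijectively under $\Psi$, and at a critical point $\hess u=D\Psi^{T}(\hess\tilde u)D\Psi$ with $D\Psi$ invertible, so nondegeneracy is preserved and Theorem~\ref{mainthm} follows.

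It remains to identify the geometric content of~\ref{hp3} and to establish injectivity. First I would check that~\ref{hp3} is exactly the statement that $\partial\tilde\Omega$ has nonnegative curvature. Indeed, under the conformal rescaling $g=e^{2\rho}g_0$, with $g_0$ the Euclidean metric and $\rho=\tfrac12\log f$, the geodesic curvature transforms as $\kappa_g=e^{-\rho}\bigl(\curv+\partial_\nu\rho\bigr)=f^{-1/2}\bigl(\curv+\tfrac1{2f}\tfrac{\partial f}{\partial\nu}\bigr)$, and this equals the Euclidean curvature of the image curve $\Psi(\partial\Omega)$. Since $f>0$, the sign of $\kappa_g$ coincides with that of $\tfrac12\tfrac{\partial f}{\partial\nu}+\curv f$, so~\ref{hp3} says precisely that the image boundary is weakly convex as a curve.

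The main obstacle is to upgrade local univalence of $\Psi$ to global injectivity onto a genuinely convex region. I would compute the rotation index of the image curve: parametrising $\partial\Omega$ by a positively oriented Jordan curve $\gamma$, the tangent to $\Psi\circ\gamma$ is $G(\gamma)\gamma'$, so its turning number equals that of $\gamma$ (namely $1$) plus $\tfrac1{2\pi}$ times the variation of $\arg G$ along $\gamma$; by the argument principle the latter counts the zeros of $G$ in $\Omega$, which is $0$. Hence $\Psi(\partial\Omega)$ is a closed $C^2$ curve of rotation index $1$ with nonnegative curvature, so by Hadamard's theorem on ovals it is a convex Jordan curve. A holomorphic map carrying $\partial\Omega$ homeomorphically onto a Jordan curve is, again by the argument principle, a biholomorphism onto the bounded Jordan region; thus $\Psi\colon\Omega\to\tilde\Omega$ is a conformal diffeomorphism and $\tilde\Omega$ is convex, completing the reduction. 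I expect the delicate points to be the boundary regularity needed to apply the curvature formula and the theory of convex curves, and the careful matching of the sign conventions for $\nu$ and $\kappa_g$, so that~\ref{hp3} indeed forces convexity rather than its opposite.
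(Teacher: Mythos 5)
Your proposal is correct, and its skeleton coincides with the paper's: build a holomorphic $T$ with $\abs{T'}^2=f$ from the harmonic conjugate of $\tfrac12\log f$ (this is exactly the paper's Proposition 2.1$(i)$), observe that the conformal change turns~\ref{PbPoisson} into the torsion problem $-\Delta v=1$ on the image, invoke Makar-Limanov, and pull back critical points together with nondegeneracy via the invertibility of the differential — your identity $\hess u=D\Psi^{T}(\hess\tilde u)D\Psi$ at a critical point is what the paper verifies by hand when it computes $v_{\x\x}v_{\y\y}-v_{\x\y}^2=(u_{xx}u_{yy}-u_{xy}^2)(\phi_\x\psi_\y-\phi_\y\psi_\x)^2$. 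Where you genuinely diverge is in the two supporting steps. For the curvature identification you use the conformal-change formula for geodesic curvature in the metric $e^{2\rho}g_0$, $\rho=\tfrac12\log f$, whereas the paper computes directly from Needham's formula $\widetilde\curv(\zeta)=\abs{T'(z)}^{-1}\bigl(\Im\bigl(\tangvet T''/T'\bigr)+\curv(z)\bigr)$ and the identity $\Im\bigl(\tangvet T''/T'\bigr)=\nu\cdot\nabla f/(2f)$; the two computations agree, $\abs{T'}\widetilde\curv=(f_\nu+2\curv f)/(2f)$, and your Riemannian phrasing makes the meaning of~\ref{hp3} more transparent. For the crucial globalization step the paper simply cites a univalence theorem ([Go72, Theorem B]: $T$ proper, $T'\neq0$, $T(\Omega)$ simply connected imply $T$ invertible), while you give a self-contained classical argument: rotation index of the image curve equals $1$ because $\arg G$ has zero variation along $\partial\Omega$ (argument principle, $G=e^w$ nonvanishing), then Fenchel/Hadamard upgrades ``index $1$ plus $\kappa\ge0$'' to a simple convex curve, and the Darboux--Picard/argument-principle theorem yields that $\Psi$ is a biholomorphism onto the bounded Jordan region. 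This costs you the Umlaufsatz and the weak-curvature version of Hadamard's theorem (which does hold: monotone tangent angle with total turning $2\pi$ gives the boundary of a convex body, hence a simple curve — and this simplicity is precisely the boundary injectivity your Darboux--Picard step needs, so there is no circularity), but it buys something the paper's citation leaves implicit: convexity of the image domain comes out directly as the interior of the convex Jordan curve, rather than being asserted from $\widetilde\curv\ge0$ before invertibility is known. Only make sure, as you yourself flag, to fix the orientation convention (the paper takes $\partial\Omega$ with winding number $1$ and $\nu=(y_\tangvet,-x_\tangvet)$ outward) so that~\ref{hp3} forces nonnegative rather than nonpositive curvature of the image.
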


\begin{rmk}
\begin{enumerate}
\item Theorem~\ref{mainthm} holds even if $\Omega$ is not convex. For instance, if we consider $f(x,y):=e^{2x}$ then equation~\ref{hp2} is trivially satisfied while equation~\ref{hp3} is satisfied if $\Omega$ is such that
\begin{equation}
\label{dom:exp}
\nu_x\ge-\curv,\quad\text{on }\partial\Omega,
\end{equation}
where we write $\nu:=(\nu_x,\nu_y)$. Hence, we can find a non convex domain $\Omega$ such that~\ref{dom:exp} is satisfied, see Figure~\ref{fig1}.
\item Let us point out that we cannot drop assumption~\ref{hp2} or~\ref{hp3}, otherwise we can lose the uniqueness of the critical point: see Remark~\ref{rmk1-contr} and Remark~\ref{rmk2-contr} for the details.
\item The preceding theorem can be seen as a generalization of Makar-Limanov's result in~\cite{ML71} for the torsion problem. Indeed, for $f\equiv1$ and $\Omega$ convex it is easy to see that assumptions~\ref{hp2} and~\ref{hp3} are trivially satisfied.
\end{enumerate}
\end{rmk}

\vspace{20pt}

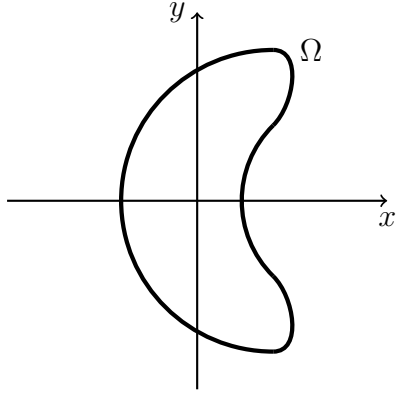
\begin{figure}[]
\begin{center}
	\begin{tikzpicture}
	
	\draw[thick,->] (-3.5,0) -- (1.5,0) node[below] {$x$};
	\draw[thick,->] (-1,-2.5) -- (-1,2.5) node[left] {$y$};

	\draw [ultra thick] (0,2) arc (90:270:2);
	\draw [ultra thick] (0,1) to [out=225,in=135] (0,-1);
	
	\draw [ultra thick] (0,2) to [out=0,in=45] (0,1);
	
	\draw [ultra thick] (0,-1) to [out=315,in=0] (0,-2);
	
	\node at (0.5,2) {$\Omega$};

	\end{tikzpicture}
	\end{center}
\caption{\label{fig1}Example of domain which satisfies~\ref{dom:exp}.}
\end{figure}

 The idea of the proof is the following: thanks to the assumptions~\ref{hp2} and~\ref{hp3} we can construct an invertible conformal map that maps $\Omega$ to a bounded and convex domain where we are able to estimate the number of critical points of the composition of our solution with the inverse of the conformal map.\\

In the second part of this paper, we want to investigate the nonlinear problem
\begin{equation}
\label{CAP4:PB0}
\begin{cases}
-\Delta u=f(u)&\hbox{in }\Omega\\
u>0&\hbox{in }\Omega\\
u=0&\hbox{on }\partial\Omega,
\end{cases}
\end{equation}
where $\Omega$ is still a smooth bounded and convex domain in $\R^2$ and $f:\R\to\R$ is a smooth nonlinearity.

We are interested in the number of critical points of solutions of the preceding problem when $\Omega$ is close to be a convex domain.

We recall that if $\Omega$ is convex, we know that a semi-stable solution $u$ of~\ref{CAP4:PB0} admits exactly one critical point which turns out to be nondegenerate, see~\cite{cc,dgm}. Then we can ask what happens if we consider domains which are (possibly) non convex, but close to a convex one.

First of all let us recall that for a convex domain $\Omega\subseteq\R^N$, with no empty interior, we can find a Lipschitz function $\chi_\Omega:\mathbb S^{N-1}\to(0+\infty)$ such that
\[
\Omega=\Set{tP(\left(1+\chi_\Omega(P)\right)|P\in\mathbb S^{N-1},\quad t\in[0,1)},
\]
where up to a translation and a dilatation we assumed without loss of generality that $B_1(\orig)\subseteq\Omega$. Furthermore, if we assume $\Omega$ to be of class $\mathcal C^k$ then $\chi_\Omega\in\mathcal C^k(\mathbb S^{N-1})$. 

Hence let us give the following definition of convergence of a family of smooth sets to a smooth and convex one.

\begin{definition}
\label{cap4:def}
Given a bounded and convex set $\Omega\subseteq \R^N$ of class $\mathcal C^k$ and with no empty interior, we say that the family $(\Omega_\eps)_\eps\subseteq\R^N$ of bounded sets of class $\mathcal C^k$ \emph{converges to the convex set} $\Omega$ for $\eps\to0$ - and we write $\Omega_\eps\to\Omega$ for $\eps\to0$ - if there exists a family of functions $(\chi_{\Omega_\eps})_\eps\subseteq\mathcal C^k(\mathbb S^{N-1})$ such that
\[
\Omega_\eps=\Set{tP(\left(1+\chi_{\Omega_\eps}(P)\right)|P\in\mathbb S^{N-1},\quad t\in[0,1)},
\]
and
\[
\norm{\chi_{\Omega_\eps}-\chi_{\Omega}}_{\mathcal C^k(\mathbb S^{N-1})}\to 0,\quad\text{as }\eps\to0.
\]
Here again we assumed, without loss of generality, that $B_1(\orig)\subseteq\Omega$.
\end{definition}

Now, let us fix a smooth and convex domain $\Omega\subseteq\R^2$, with $B_1(\orig)\subseteq\Omega$ and consider a family of domains $\Omega_\eps$ that are smooth and such that $\Omega_\eps\to\Omega$ for $\eps\to0$, say at least in $\mathcal C^4$ sense, according to the preceding definition.

The following result holds true.
\begin{thm}
\label{thm2}
Let $u_\eps$ be a semi-stable solution the following problem
\begin{equation}
\label{CAP4:PB1}
\begin{cases}
-\Delta u_\eps= f(u_\eps)&\text{ in }\ \Omega_\eps\\
u_\eps>0&\text{ in }\ \Omega_\eps\\
u_\eps=0&\text{ on }\ \partial\Omega_\eps,
\end{cases}
\end{equation}
with $f(0)\ge0$ and assume that $\norm{u_\eps}_{L^\infty(\Omega_\eps)}\le C$ for some $C>0$.
Then $u_\eps$ has a unique nondegenerate critical point $P_\eps\in\Omega_\eps$. Moreover $P_\eps\to P_c$ where $P_c\in\Omega$ is the unique critical point of a semi-stable solution $u$ of problem~\ref{CAP4:PB0} in $\Omega$.
\end{thm}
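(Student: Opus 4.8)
The plan is to run a compactness-and-perturbation argument that reduces the problem on $\Omega_\eps$ to the already understood situation on the convex limit $\Omega$, where the result of Cabr\'e--Chanillo and its extension~\cite{cc,dgm} applies. First I would transplant everything onto the fixed domain $\Omega$: by Definition~\ref{cap4:def} the $\mathcal C^4$ convergence $\Omega_\eps\to\Omega$ provides diffeomorphisms $\Phi_\eps\colon\overline\Omega\to\overline{\Omega_\eps}$ with $\Phi_\eps\to\mathrm{Id}$ in $\mathcal C^4$, so that $v_\eps:=u_\eps\circ\Phi_\eps$ solves a uniformly elliptic problem on $\Omega$ whose coefficients converge to those of $-\Delta$. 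Since $\norm{u_\eps}_{L^\infty(\Omega_\eps)}\le C$ and $f$ is smooth, Schauder and Calder\'on--Zygmund estimates up to the boundary give uniform $\mathcal C^{2,\alpha}(\overline\Omega)$ bounds for $v_\eps$; by Arzel\`a--Ascoli, along a subsequence $v_\eps\to u$ in $\mathcal C^{2}(\overline\Omega)$, where $u$ solves $-\Delta u=f(u)$ in $\Omega$, $u=0$ on $\partial\Omega$, $u\ge0$.

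Next I would verify that the limit $u$ inherits the hypotheses needed to invoke~\cite{cc,dgm}. Semi-stability passes to the limit, because the first eigenvalue of $-\Delta-f'(\cdot)$ depends continuously on the coefficients and on the domain, so from $\lambda_1\ge0$ for the transported problems one gets $\lambda_1\big(-\Delta-f'(u),\Omega\big)\ge0$. One must also exclude the degenerate limit $u\equiv0$ and obtain strict positivity: writing $f(u)=f(0)+g(x)u$ with $g(x)=\int_0^1 f'(tu)\,dt$, the hypothesis $f(0)\ge0$ together with the strong maximum principle and Hopf's lemma yields $u>0$ in $\Omega$ and $\partial_\nu u<0$ on $\partial\Omega$ (the borderline case $f(0)=0$, where $u\equiv0$ is admissible, is the one requiring extra care). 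Thus $u$ is a genuine semi-stable positive solution on the convex domain $\Omega$, and~\cite{cc,dgm} give a unique nondegenerate critical point $P_c\in\Omega$: $\nabla u(P_c)=0$ and $\det\hess u(P_c)\ne0$.

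I would then localize near $P_c$ and exploit nondegeneracy. Since $\det\hess u(P_c)\ne0$ and $v_\eps\to u$ in $\mathcal C^{2}$, the inverse function theorem applied to the maps $\nabla v_\eps$ produces, for $\eps$ small, exactly one critical point $\widetilde P_\eps$ of $v_\eps$ in a fixed ball around $P_c$; it is nondegenerate and $\widetilde P_\eps\to P_c$. Transporting back, $P_\eps:=\Phi_\eps(\widetilde P_\eps)$ is a nondegenerate critical point of $u_\eps$ with $P_\eps\to P_c$.

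The final step---which I expect to be the main obstacle---is ruling out any \emph{further} critical points of $u_\eps$, and two escape routes must be closed. Near the boundary, Hopf's lemma gives $|\nabla u|\ge\delta>0$ in a neighborhood of $\partial\Omega$, hence by $\mathcal C^{1}$ convergence $|\nabla v_\eps|\ge\delta/2$ there for $\eps$ small, so no critical point can approach $\partial\Omega$; making this lower bound \emph{uniform in} $\eps$ under the $\mathcal C^4$ domain perturbation is the delicate analytic point. In the interior, if $v_\eps$ had a critical point $Q_\eps$ outside the fixed ball around $P_c$, the uniform bounds give $Q_\eps\to Q$ along a subsequence with $\nabla u(Q)=0$ and $Q\ne P_c$, contradicting uniqueness of the critical point of $u$. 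A last point, if one wants convergence of the whole family rather than subsequences, is to ensure that the limit $u$---and hence $P_c$---does not depend on the chosen subsequence.
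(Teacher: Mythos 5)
Your proposal is correct in outline and reaches the conclusion by the same overall strategy as the paper (compactness, convergence to the semi-stable solution on the convex limit domain, then the Cabr\'e--Chanillo/\cite{dgm} uniqueness plus nondegeneracy), but the technical implementation differs in how the varying domain is handled. You pull everything back to the fixed domain via global diffeomorphisms $\Phi_\eps\to\mathrm{Id}$ in $\mathcal C^4$ and run uniform Schauder estimates on $\Omega$, getting $\mathcal C^{2,\alpha}(\overline\Omega)$ compactness in one stroke; the paper instead extends $u_\eps$ by zero to a fixed larger domain $\Omega^\rho$, extracts a weak $H^1$ limit, and then obtains $\mathcal C^2$ convergence by a local boundary-flattening trick: writing $\partial\Omega_\eps$ near each boundary point as a graph perturbation $\Gamma^i+\gamma^i_\eps$ with $\gamma^i_\eps\to0$ in $\mathcal C^4$, setting $\bar u_\eps(x,y):=u_\eps(x,y+\gamma_\eps(x))$, and estimating the resulting source term $h_\eps\to0$ via iterated $H^m$ bounds and boundary regularity, combined with interior estimates. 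Your route buys a cleaner global statement (it controls the thin region $\Omega_\eps\setminus\Omega$ automatically, whereas the paper's convergence is stated on $\overline{\Omega_\eps\cap\Omega}$ and the boundary strip is only controlled through the translated function $\bar u_\eps$), at the cost of having to actually construct $\Phi_\eps$ from the radial parametrization of Definition~\ref{cap4:def} (a cutoff is needed to avoid the singularity of the radial map at the origin -- routine, but worth a line). You are also more explicit than the paper on the endgame: the inverse function theorem near $P_c$, the compactness argument excluding interior critical points far from $P_c$, and the Hopf bound near $\partial\Omega$; the paper compresses all of this into one sentence. Two of your flagged worries can be settled quickly: the uniform-in-$\eps$ Hopf bound is not actually needed in your scheme, since Hopf is applied only to the fixed limit $u$ (note $-\Delta u+Lu\ge f(0)\ge0$ for $L:=\sup|f'|$ on the relevant range, so Hopf applies) and $\mathcal C^1$ convergence of $v_\eps$ transfers the bound; and the two points you honestly flag as open -- ruling out $u\equiv0$ when $f(0)=0$, and subsequence-independence of the limit (semi-stable solutions need not be unique, and the theorem's phrasing ``a semi-stable solution'' reflects this) -- are glossed over in the paper's proof as well, so they are not gaps of your argument relative to the paper's.
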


\begin{rmk}
As will be explained in details in Remark~\ref{rmk:hp:limitatezzanorma}, let us point out that the hypothesis that $\norm{u_\eps}_{L^\infty(\Omega_\eps)}$ is uniformly bounded is always satisfied if, for instance, we assume that the nonlinearity has the form $f(u)=\lambda g(u)$, $g$ is smooth and satisfies
\begin{gather}
\label{i2}g:\R\to\R\,\hbox{ is increasing and convex},\\
\label{i3}g(0)>0,
\end{gather}
and $\lambda\in(0,\lambda^*(\Omega))$, with $\lambda^*(\Omega)>0$. We recall that in this setting it is well known that there exists $\lambda^*(\Omega)>0$ such that for all $\lambda\in (0,\lambda^*(\Omega))$ the problem
\begin{equation}
\label{PB0}
\begin{cases}
-\Delta u=\lambda g(u)&\hbox{in }\Omega\\
u>0&\hbox{in }\Omega\\
u=0&\hbox{on }\partial\Omega
\end{cases}
\end{equation}
admits a  stable solution, see for instance~\cite{Bandle(book),cr75,mp} and the references therein.
\end{rmk}

\begin{rmk}
Observe that if the limit domain is convex, but unbounded, then the result of Theorem~\ref{thm2} does not hold. Indeed, it is possible to build a family of domains which converges to the strip $\mathcal S:=\R\times(-1,1)$ and such that the corresponding solutions have an arbitrary large (finite) number of critical points. More precisely for any $\lambda\in(0,\la^*(-1,1))$ and for all $k\in\N$ there exists a family of smooth and bounded domains $\widetilde\Omega_\e\subseteq\R^{2}$ such that
\begin{enumerate}[(i)]
\item $\widetilde\Omega_\e$ is star-shaped with respect to and interior point;
\item $\widetilde\Omega_\e$ locally converges to the strip $\mathcal S$ for $\e\to0$, i.e. for all compact sets $K\subseteq\R^{2}$ it holds $\abs{K\cap(\mathcal S\Delta\widetilde\Omega_\e)}\to0$ as $\e\to0$;
\item $\lambda^*(\widetilde\Omega_\e)\ge\la^*(-1,1)$ for $\e$ small enough;
\item if $\widetilde u_\e$ is the stable solution of problem~\ref{PB0} in $\widetilde\Omega_\e$ then $\widetilde u_\e$ has at least $k$ maximum points.
\end{enumerate}

This fact has been proved in~\cite{gg3} for the torsion problem $f\equiv1$ and generalized to any $f=\lambda g$, with $g$ satisfying~\ref{i2} and~\ref{i3} in~\cite{DRG}.\\
\end{rmk}

To prove Theorem~\ref{thm2} we show that $u_\eps$ converges to $u$, the solution of probem~\ref{CAP4:PB0} in $\Omega$, and then the claim can be deduced by the case of the convex domain.\\

The paper is organized as follows: in the next section we prove Theorem~\ref{mainthm}, and we conclude it by showing that if at least one between~\ref{hp2} and~\ref{hp3} does not hold, then Theorem~\ref{mainthm} may fail.
In Section~\ref{sez3} we prove Theorem~\ref{thm2}.
\newpage
\section{The Poisson problem}

In this section we prove Theorem~\ref{mainthm}.
Until the end of the section we identify $\R^2$ with $\C$ and we write $z:=x+iy$.

Assume $f:\overline\Omega\to\R$ be positive and such that assumptions~\ref{hp2} and~\ref{hp3} are satisfied. Then the following proposition holds true.
\begin{prop}
\label{prop1}
There exists a holomorphic function $T:\Omega\to\C$ such that
\begin{enumerate}[(i)]
\item $\abs{T'}^2=f$ in $\Omega$,
\item $T(\Omega)$ is bounded and convex,
\item there exists a holomorphic function $\tau:T(\Omega)\to\C$ such that $\tau=T^{-1}$.
\end{enumerate}
\end{prop}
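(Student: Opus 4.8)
The plan is to build $T$ directly from its logarithmic derivative. Since $\log f$ is harmonic by~\ref{hp2}, so is $\tfrac12\log f$, and because $\Omega$ is simply connected it admits a single-valued harmonic conjugate $v$; I set $h:=\tfrac12\log f+iv$, which is holomorphic on $\Omega$ and, by smoothness of $f>0$ up to $\partial\Omega$, extends to $h\in\mathcal C^\infty(\overline\Omega)$. I then define $T':=e^{h}$ and $T(z):=\int_{z_0}^{z}e^{h(\zeta)}\,d\zeta$, the integral being path-independent by simple connectedness. This $T$ is holomorphic with $T'=e^h$ nowhere zero, and $\abs{T'}^2=e^{2\operatorname{Re}h}=e^{\log f}=f$, which is~(i). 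Boundedness of $T(\Omega)$ in~(ii) is then immediate, since $\abs{T'}=f^{1/2}$ is bounded on the compact set $\overline\Omega$, so $T$ is Lipschitz and $T(\overline\Omega)$ is compact.

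The geometric content of~\ref{hp3} enters through the conformal transformation law for the curvature of boundary curves. Writing $\lambda:=\abs{T'}=f^{1/2}$ for the conformal factor and parametrizing $\partial\Omega$ by arclength, the image curve $\Gamma:=T(\partial\Omega)$ has signed curvature
\[
\curv_\Gamma=\frac{1}{\lambda}\left(\curv+\frac{\partial}{\partial\nu}\log\lambda\right)=\frac{1}{f^{1/2}}\left(\curv+\frac{1}{2}\frac{\partial_\nu f}{f}\right)=\frac{1}{f^{3/2}}\left(\curv f+\frac12\frac{\partial f}{\partial\nu}\right),
\]
where $\nu$ is the outer normal. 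By~\ref{hp3} the last parenthesis is nonnegative, so $\curv_\Gamma\ge0$ everywhere.

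Next I show that $\Gamma$ is a simple closed convex curve. It is a regular closed curve, since its velocity $T'(z(s))\,t(s)$ has modulus $\lambda>0$, where $t$ denotes the unit tangent of $\partial\Omega$. Its tangent angle is $\arg T'+\arg t=\operatorname{Im}h+\arg t$. As $z$ runs once around $\partial\Omega$, the single-valued function $\operatorname{Im}h$ returns to its initial value, contributing zero net rotation, while $\arg t$ increases by $2\pi$ by the theorem of turning tangents for the simple curve $\partial\Omega$; hence $\Gamma$ has rotation index $1$. Since $\curv_\Gamma\ge0$, the tangent angle is monotone, and a monotone tangent angle with total increment $2\pi$ forces the curve to be simple and to bound a convex region. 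Thus $\Gamma$ is a convex Jordan curve and, in particular, $T|_{\partial\Omega}$ is injective, hence a homeomorphism onto $\Gamma$.

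Finally, since $T$ is holomorphic on $\Omega$, continuous on $\overline\Omega$, and maps $\partial\Omega$ homeomorphically onto the Jordan curve $\Gamma$, the argument principle (the boundary correspondence principle for univalent functions) yields that $T$ is a biholomorphism from $\Omega$ onto the interior of $\Gamma$. That interior is the bounded convex region enclosed by $\Gamma$, giving~(ii), and the holomorphic inverse furnishes $\tau=T^{-1}$ as in~(iii). I expect the main obstacle to be precisely this passage from the pointwise curvature sign to global univalence: the delicate point is deducing \emph{simplicity} of $\Gamma$, not merely local convexity, so that the boundary correspondence theorem applies. The rotation-index computation, which relies crucially on the single-valuedness of $\log T'=h$ granted by~\ref{hp2}, is what makes this step go through.
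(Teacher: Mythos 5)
Your proposal is correct, and it diverges from the paper at exactly the step you flagged as delicate. The construction of $T$ (harmonic conjugate of $\tfrac12\log f$, exponentiate, take a primitive) and the curvature computation are the same in substance: your conformal transformation law $\curv_\Gamma=\lambda^{-1}(\curv+\partial_\nu\log\lambda)$ is precisely the formula the paper extracts from Needham's identity $\widetilde\curv=\abs{T'}^{-1}\bigl(\Im(\tangvet T''/T')+\curv\bigr)$ by a hands-on real-variable calculation showing $\Im(\tangvet T''/T')=\nu\cdot\nabla f/(2f)$ — note in passing that assumption~\ref{hp2} is exactly flatness of the pulled-back metric $f\,\abs{dz}^2$, which is why the transformation law closes up so cleanly. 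Where you genuinely part ways is global univalence: the paper dispatches it by citing a global inversion theorem (\cite[Theorem B]{Go72}: $T$ proper, $T'\neq0$, $T(\Omega)$ simply connected implies $T$ invertible), whereas you first prove that $T(\partial\Omega)$ is a \emph{simple} convex curve — via the single-valuedness of $\Im h$, Hopf's Umlaufsatz giving rotation index $1$, and the classical fact that a closed curve with $\curv_\Gamma\ge0$ and rotation index $1$ is embedded and convex — and then invoke the Darboux--Picard boundary correspondence theorem. Your route is longer but buys something real: the paper's conclusion ``$\widetilde\curv\ge0$ on $\partial T(\Omega)$, hence $T(\Omega)$ is convex'' tacitly presupposes that $T(\partial\Omega)$ is a Jordan curve bounding $T(\Omega)$, which strictly speaking rests on the injectivity established only afterwards; your ordering (simplicity of the image curve first, then boundary correspondence) removes that circularity and is self-contained modulo two classical citations. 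The one step you should reference rather than assert is the ``nonnegative curvature plus rotation index $1$ implies simple and convex'' lemma (it is nontrivial, especially when $\curv_\Gamma$ vanishes on arcs; see e.g. do Carmo's treatment of convexity and the rotation index), and similarly the Darboux--Picard theorem (e.g. Burckel). Minor points: smoothness of $h$ up to $\overline\Omega$ deserves the one-line justification that $\nabla(\Im h)$ is determined by the Cauchy--Riemann equations from the smooth field $\nabla(\tfrac12\log f)$, and orientation must be fixed as in the paper (winding number $1$) so that the index comes out $+1$ rather than $-1$; both are cosmetic.
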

\begin{proof}
Without loss of generality, we can assume $\orig\in\Omega$. 
Since $\log f$ is harmonic in $\Omega$ by~\ref{hp2} and $\Omega$ is simply connected, we can find a holomorphic function $w:\Omega\to\R$  such that
\[
\Re(w)=\frac{1}{2}\log f.
\]
Moreover, since $f>0$ in $\overline\Omega$, $w$ is holomorphic up to the boundary of $\Omega$.
Hence, also the function $\mathfrak t:=e^w$ is holomorphic in $\overline\Omega$ and if we decompose it by modulus and principal argument we have
\[
\mathfrak t=\abs{\mathfrak t}e^{i\Theta}.
\]
Finally
\[
\frac{1}{2}\log f=\Re(w)=\Re\left(\log\mathfrak t\right)=\Re\left(\log(\abs{\mathfrak t}e^{i\Theta})\right)=\log\abs{\mathfrak t},
\]
which yields to
\begin{equation}
\label{eq1}
\abs{\mathfrak t}^2=f.
\end{equation}
Then, since $\Omega$ is simply connected and $\mathfrak t$ is holomorphic, we can define $T$ as the primitive of $\mathfrak t$ such that $T(\orig)=\orig$. Clearly $T$ is holomorphic and~\ref{eq1} implies $\abs{T'}^2=f$, proving $(i)$.

To prove $(ii)$ note that since $T$ is continuous up to the boundary of $\Omega$ we have that $\abs{T}$ is bounded and then $T(\Omega)$ is.
To show the second claim we recall that the curvature $\widetilde\curv$ of the boundary of $T(\Omega)$, in $\zeta\in\partial T(\Omega)$ is given by (see~\cite[equation (23) pag. 234]{Ne97})
\begin{equation}
\label{eqcurv}
\widetilde\curv(\zeta)=\frac{1}{\abs{T'(z)}}\left(\Im\left(\frac{\tangvet(z)T''(z)}{T'(z)}\right)+\curv(z)\right),
\end{equation}
where $z\in\partial\Omega$ satisfies $T(z)=\zeta$ and $\tangvet(z)$ is the unit tangent vector to $\partial\Omega$ in $z$. Here, since $\Omega$ is simply connected then its boundary $\partial\Omega$ is a Jordan curve and we assume it is orientated is such a way that the winding number satisfies
\[
W_{\partial\Omega}(z):=\frac{1}{2\pi i}\int_{\partial\Omega}\frac{d\mathbf{z}}{\mathbf{z}-z}=\begin{cases} 1&\text{if }z\in\Omega\\0&\text{if }z\in\C\setminus\overline\Omega.\end{cases}
\]
Thanks to this convention the unit tangent vector $\tangvet$ is uniquely determined by the orientation of $\partial\Omega$.
 If we write $T:=h+ig$ we have
\[
T'=h_x-ih_y,\quad\text{and}\quad T''=h_{xx}-ih_{xy},
\]
and then, writing $\tangvet=x_\tangvet+iy_\tangvet$, one has
\begin{align}
\Im\left(\frac{\tangvet T''}{T'}\right)&=\Im\left(\frac{(x_\tangvet+iy_\tangvet)(h_{xx}-ih_{xy})}{h_x-ih_y}\right)\nonumber\\
			&=\Im\left(\frac{x_\tangvet h_{xx}+y_\tangvet h_{xy}+i(-x_\tangvet h_{xy}+y_\tangvet h_{xx})}{h_x-ih_y}\right)\nonumber\\
			&=\frac{x_\tangvet(h_yh_{xx}-h_xh_{xy})+y_\tangvet(h_yh_{xy}+h_xh_{xx})}{h_x^2+h_y^2}\nonumber\\
\label{eqalign}	&=\frac{y_\tangvet(h_xh_{xx}+h_yh_{xy})-x_\tangvet(h_xh_{xy}+h_yh_{yy})}{h_x^2+h_y^2}.
\end{align}
Taking into account
\begin{align*}
\partial_xf&=\partial_x(h_x^2+h_y^2)=2h_xh_{xx}+2h_yh_{xy},\\
\partial_yf&=\partial_y(h_x^2+h_y^2)=2h_xh_{xy}+2h_yh_{yy},
\end{align*}
equation~\ref{eqalign} becomes
\[
\Im\left(\frac{\tangvet T''}{T'}\right)=\frac{y_\tangvet f_x-x_\tangvet f_y}{2f}=\frac{\nu\cdot\nabla f}{2f},
\]
where 
$\nu=(y_\tangvet,-x_\tangvet)$. Finally, the previous equation and~\ref{eqcurv} imply
\[
\abs{T'}\widetilde\curv=\Im\left(\frac{\tangvet T''}{T'}\right)+\curv=\frac{f_\nu+2\curv f}{2f}\ge0,
\]
where the last inequality holds true by~\ref{hp3}. Then $\widetilde\curv\ge0$ and $T(\Omega)$ is convex.

Since $T$ is proper, $T'\not=0$ and $T(\Omega)$ is simply connected,~\cite[Theorem B]{Go72} tells us that $T$ is invertible. Finally, the inverse is holomorphic by the Open Mapping Theorem and $(iii)$ follows.
\end{proof}

\begin{rmk}
In particular $T$ is a conformal map, indeed $\abs{T'}^2=f>0$ in $\overline\Omega$ and then $\Omega$ and $T(\Omega)$ are conformally equivalent.
\end{rmk}

\begin{rmk}
The function $T$ can be written in a more explicit way by setting
\[
T'(z)=\mathfrak t(x+iy):=\frac{1}{\overline{\mathfrak t(\orig)}}f\left(\frac{x+iy}{2},\frac{y-ix}{2}\right).
\]
See~\cite[equation (4.3)]{Sh04}.
\end{rmk}

We can now prove  Theorem~\ref{mainthm}.

\begin{proof}[Proof of Theorem~\ref{mainthm}]
Let us denote $\Lambda:=T(\Omega)$ with coordinates $\zeta:=\x+i\y$ and set
\[
v(\x,\y):=u(\tau(\x,\y)),
\]
where we recall that $\tau=T^{-1}$. If we write $\tau:=\phi+i\psi$ we have
\begin{align*}
\partial_\x v&=u_x\phi_\x+u_y\psi_\x,\\
\partial_\y v&=u_x\phi_\y+u_y\psi_\y,
\end{align*}
and
\begin{align*}
\partial_{\x\x} v&=u_{xx}\phi_\x^2+2u_{xy}\phi_\x\psi_\x+u_{yy}\psi_\x^2+u_x\phi_{\x\x}+u_y\psi_{\x\x},\\
\partial_{\y\y} v&=u_{xx}\phi_\y^2+2u_{xy}\phi_\y\psi_\y+u_{yy}\psi_\y^2+u_x\phi_{\y\y}+u_y\psi_{\y\y}.
\end{align*}
Hence by the Cauchy-Riemann equations one has
\begin{align*}
\Delta v&=u_{xx}\abs{\nabla\phi}^2+2u_{xy}\nabla\phi\nabla\psi+u_{yy}\abs{\nabla\psi}^2+u_x\Delta\phi+u_y\Delta\psi=\Delta u\abs{\tau'}^2=-f\abs{\tau'}^2,
\end{align*}
and then by $(i)$ of Proposition~\ref{prop1} we get
\[
-\Delta v=f\abs{T'}^{-2}=1,
\]
that is $v$ is the solution of the torsion problem in $\Lambda$, i.e.
\[
\begin{cases}
-\Delta v=1&\text{in }\Lambda\\
v=0&\text{on }\partial\Lambda.
\end{cases}
\]
By~\cite[Theorem 1]{ML71}, $v$ has a unique nondegenerate critical point $(\x_0,\y_0)\in\Lambda$ and then $(x_0,y_0):=\tau(\x_0,\y_0)\in\Omega$ is the unique critical point of $u$.
To show the nondegeneracy of $(x_0,y_0)$, since $u_x(x_0,y_0)=u_y(x_0,y_0)=0$ one has
\begin{align*}
\partial_{\x\x} v(\tau(\x_0,\y_0))&=u_{xx}\phi_\x^2+2u_{xy}\phi_\x\psi_\x+u_{yy}\psi_\x^2,\\
\partial_{\x\y} v(\tau(\x_0,\y_0))&=u_{xx}\phi_\x\phi_\y+u_{xy}(\phi_\x\psi_\y+\phi_\y\psi_\x)+u_{yy}\psi_\x\psi_\y,\\
\partial_{\y\y} v(\tau(\x_0,\y_0))&=u_{xx}\phi_\y^2+2u_{xy}\phi_\y\psi_\y+u_{yy}\psi_\y^2.
\end{align*}
Then
\begin{align*}
\partial_{\x\x} v\partial_{\y\y} v&=u_{xx}^2\phi_\x^2\phi_\y^2+2u_{xx}u_{xy}\phi_\x^2\phi_\y\psi_\y+u_{xx}u_{yy}\phi_\x^2\psi_\y^2+\\
	&+2u_{xx}u_{xy}\phi_\x\phi_\y^2\psi_\x+4u_{xy}^2\phi_\x\phi_\y\psi_\x\psi_\y+2u_{xy}u_{yy}\phi_\x\psi_\x\psi_\y^2+\\
	&+u_{xx}u_{yy}\phi_\y^2\psi_\x^2+2u_{xy}u_{yy}\phi_\y\psi_\x^2\psi_\y+u_{yy}^2\psi_\x^2\psi_\y^2,
\end{align*}
and
\begin{align*}
\left(\partial_{\x\y} v\right)^2&=u_{xx}^2\phi_\x^2\phi_\y^2+u_{xy}^2(\phi_\x\psi_\y+\phi_\y\psi_\x)^2+u_{yy}^2\psi_\x^2\psi_\y^2+2u_{xx}u_{yy}\phi_\x\phi_\y\psi_\x\psi_\y+\\&
+2u_{xx}u_{xy}\phi_\x^2\phi_\y\psi_\y+2u_{xx}u_{xy}\phi_\x\phi_\y^2\psi_\x+2u_{xy}u_{yy}\phi_\x\psi_\x\psi_\y^2+2u_{xy}u_{yy}\phi_\y\psi_\x^2\psi_\y.
\end{align*}
Finally we have that
\[
v_{\x\x}v_{\y\y}-v_{\x\y}^2=(u_{xx}u_{yy}-u_{xy}^2)(\phi_\x\psi_\y-\phi_\y\psi_\x)^2,
\]
and since $(\x_0,\y_0)$ is nondegenerate, the same holds true for $(x_0,y_0)$.
\end{proof}

\subsection{Final remarks}

We conclude this section by showing that if at least one between~\ref{hp2} and~\ref{hp3} does not hold, then Theorem~\ref{mainthm} may fail.

\begin{rmk}
\label{rmk1-contr}
If $f>0$ in $\overline\Omega$, satisfies~\ref{hp2}, but does not satisfy~\ref{hp3} then the solution of the Poisson problem~\ref{PbPoisson} can have more than one critical point.

Indeed in~\cite{gg3} it is shown that for any $\delta>0$, there exists a star-shaped domain $\Omega:=\Omega(\delta)$ such that the solution of the torsion problem, i.e. $f\equiv 1$, admits at least two critical points. Moreover one has $\curv_{|\partial\Omega}\ge-\delta$ and it is negative somewhere. Then $f>0$ in $\overline\Omega$,~\ref{hp2} is satisfied but~\ref{hp3} is not.
\end{rmk}

\begin{rmk}
\label{rmk2-contr}
If $f>0$ in $\overline\Omega$, satisfies~\ref{hp3}, but does not satisfy~\ref{hp2} then the solution of the Poisson problem~\ref{PbPoisson} can have more than one critical point.

Indeed, as a consequence of~\cite[Theorem 1.1]{EPW}, one has that if $p>1$ is large enough there exists a solution $u$ of the following Hénon problem
\[
\begin{cases}
-\Delta u=(x^2+y^2)^\alpha \abs{u}^p &\text{in }B\\
u>0 &\text{in }B\\
u=0 &\text{on }\partial B,
\end{cases}
\]
with $\alpha>0$, $B:=B_1(\orig)$, and there exist $Q_1,Q_2\in B$ such that
\[
\max_{B\setminus \bigcup_{i=1}^2B_{2\delta}(Q_i)} u\le \frac{\sqrt e}{4},\quad\text{and}\quad\sup_{B_\delta(Q_i)} u\ge \frac{\sqrt e}{2},\quad i=1,2,
\]
for some $0<\delta<\frac{\dist(Q_1,Q_2)}{4}$. Then let $v$ be the solution of the torsion problem in $B$ with Dirichlet boundary conditions and set
\[
u_\eps:=u+\eps v,\quad 0<\eps<\frac{\sqrt e}{4\norm{v}_{\infty}},
\]
which solves
\[
\begin{cases}
-\Delta u_\eps=f_\eps(x,y) &\text{in }B\\
u_\eps>0 &\text{in }B\\
u_\eps=0 &\text{on }\partial B,
\end{cases}
\]
with $f_\eps(x,y):=(x^2+y^2)^\alpha \abs{u}^p+\eps>0$ in $\overline B$. Then $u=0$ on $\partial B$ implies
\[
\frac{1}{2}\frac{\partial f_\eps}{\partial\nu}+\curv f_\eps=0+\eps>0,\quad\text{on }\partial\Omega,
\]
that is~\ref{hp3} is satisfied. Moreover, it is easy to see that $u_\eps$ admits at least one critical point in each $B_\delta(Q_i)$ for $i=1,2$.

Now let $Q\in B_\delta(Q_1)\setminus\{\orig\}$ be a critical point for $u$ such that $u(Q)\ge\frac{\sqrt e}{2}$. Then one has
\begin{align*}
f_\eps(Q&)^2\Delta\log f_\eps(Q)\\&={f_\eps(Q)\Delta f_\eps(Q)-\abs{\nabla f_\eps(Q)}^2}\\
&=-p\abs{Q}^{6\alpha}\abs{u(Q)}^{3p-1}+4\eps\alpha^2\abs{Q}^{2(\alpha-1)}\abs{u(Q)}^p-\eps p\abs{Q}^{4\alpha}\abs{u(Q)}^{2p-1}<0,
\end{align*}
for $\eps$ small enough and then~\ref{hp2} is not satisfied.
\end{rmk}
\newpage

\section{The nonlinear problem}
\label{sez3}

In this section we prove Theorem~\ref{thm2}. Hence, let $\Omega$ be a fixed bounded and convex domain in $\R^2$ and assume $B_1(\orig)\subseteq\Omega$. Let $\Omega_\eps$ be the family of smooth domains converging to $\Omega$ as $\eps\to 0$ and $u_\eps$ be the solution of problem~\ref{CAP4:PB1}, as in Theorem~\ref{thm2}.

\begin{proof}[Proof of Theorem~\ref{thm2}]
Since $\Omega$ is convex we know that if $u$ is a semi-stable solution of problem~\ref{CAP4:PB0} in $\Omega$, then it admits a unique nondegenerate critical point, which we denote by $P_c$.
Hence, it is enough to show that for all multi-indices $\alpha$ with $\abs{\alpha}\le2$ it holds
\begin{equation}
\label{CAP4:convergenza}
\sup_{\overline{\Omega_\eps\cap\Omega}}\left| D^\alpha\big(u_\eps-u\big)\right|\to 0,\quad\text{for }\eps\to0.
\end{equation}
Indeed, if $P_\eps\in\Omega_\eps$ is a critical point for $u_\eps$, then $P_\eps\to P_c$ as $\eps\to 0$, and it is a nondenerate maximum thanks to~\ref{CAP4:convergenza}. Then uniqueness follows from the convergence to $P_c$ and the nondegeneracy.

We prove~\ref{CAP4:convergenza} through several steps.\\

\emph{Step 1: there exists $C>0$ such that $\norm{u_\eps}_{H^2(\Omega_\eps)}\le C$.}\\
From classical regularity theory one has
\[
\norm{u_\eps}_{H^{m+2}(\Omega_\eps)}\le C(\Omega_\eps)\left(\norm{f(u_\eps)}_{H^m(\Omega_\eps)}+\norm{u_\eps}_{L^2(\Omega_\eps)}\right),
\]
but from the convergence of $\Omega_\eps$ to $\Omega$ one can see that $C(\Omega_\eps)$ does not really depend on $\eps$. Then for $m=0$ using the assumption $\norm{u_\eps}_{L^\infty(\Omega_\eps)}\le C$ we get the desired claim.\\

\emph{Step 2: $u_\eps\wto u$ in $H^1(\Omega^\rho )$, where $u$ is a semi-stable solution of problem~\ref{CAP4:PB0} in $\Omega$ and $\Omega^\rho:=\set{(x,y)\in\R^2| (\frac x\rho,\frac y\rho)\in\Omega}$, for some $\rho>1$.}\\
First of all from the convergence of $\Omega_\eps$ to $\Omega$ we can find $\rho>1$ such that $\Omega_\eps\subseteq \Omega^\rho$.
Since $\Omega_\eps$ are smooth we can consider $u_\eps$ defined in $\Omega^\rho$ by means of zero extension outside $\Omega_\eps$ and with a little abuse of notation we still denote such an extension by $u_\eps$.
Then from the previous step we have $u_\eps\wto u$ in $H^1(\Omega^\rho )$. Then it is easy to see that by means of the dominated convergence theorem it holds
\[
\int_\Omega\nabla u\nabla \phi=\int_\Omega f(u)\phi,\quad\text{for all }\phi\in\mathcal C^\infty_0(\Omega),
\]
and $u=0$ on $\partial\Omega$ in trace sense. Finally for any $\xi\in\mathcal C^\infty_0(\Omega)$ again the dominated convergence theorem gives
\[
\int_\Omega\abs{\nabla\xi}^2-\int_\Omega f'(u)\xi^2=\lim_{\eps\to0}\left(\int_\Omega\abs{\nabla\xi}^2-\int_\Omega f'(u_\eps)\xi^2\right)\ge0.
\]
Hence we proved that $u$ is a semi-stable solution of problem~\ref{CAP4:PB0} in $\Omega$.\\

\emph{Step 3: end of the proof}\\
From the convergence of $\Omega_\eps$ to $\Omega$ we can find $r>0$ and $Q_1,\dots,Q_k\in\partial\Omega$ such that
\begin{gather*}
\partial\Omega\subseteq\bigcup_{i=1}^kB_r(Q_i),\quad\text{and}\quad\partial\Omega_\eps\subseteq\bigcup_{i=1}^kB_r(Q_i),\\
\Omega\cap B_{2r}(Q_i)=\set{(x,y)\in B_{2r}(Q_i)|y>\Gamma^i(x)},\\
\Omega_\eps\cap B_{2r}(Q_i)=\set{(x,y)\in B_{2r}(Q_i)|y>\Gamma^i(x)+\gamma_\eps^i(x)},
\end{gather*}
where the last two relations hold up to a rotation and where $\Gamma^1,\dots,\Gamma^k,\gamma_\eps^1,\dots,\gamma_\eps^k$ are smooth functions such that
\[
\gamma_\eps^i\to0\quad\text{in }\mathcal C^4,\quad\text{for all }i=1,\dots,k.
\]

Now, let us fix $i=1$: up to a translation we can assume $Q_1=\orig$ and consider
\[
\bar u_\eps(x,y):=u_\eps(x,y+\gamma_\eps(x)),\quad \text{for all }(x,y)\in\Omega\cap B_{2r},
\]
where we omitted the index $i$ and $B_{2r}:=B_{2r}(\orig)$.
Then one has $\bar u_\eps=0$ on $\partial\Omega\cap B_{2r}$ and since $\gamma_\eps\to0$ in $\mathcal C^2$ it holds
\[
\sup_{\overline{\Omega_\eps\cap\Omega\cap B_{2r}}}\left| D^\alpha\big(u_\eps-\bar u_\eps\big)\right|\to 0,\quad\text{for }\eps\to0,
\]
for all multi-indices $\alpha$, with $\abs{\alpha}\le2$. Moreover, we have
\[
\begin{cases}
-\Delta(u-\bar u_\eps)=h_\eps&\text{in }\Omega\cap B_{2r}\\
u-\bar u_\eps=0&\text{on }\partial\Omega\cap B_{2r},
\end{cases}
\]
where
\[
h_\eps:=f(u)-f(\bar u_\eps)-2\partial_{xy}u_{\eps|(x,y+\gamma_\eps(x))}\dot\gamma_\eps-\partial_{yy}u_{\eps|(x,y+\gamma_\eps(x))}\dot\gamma_\eps^2-\partial_{y}u_{\eps|(x,y+\gamma_\eps(x))}\ddot\gamma_\eps.
\]
For $m\in\N$, by means of the mean value theorem and taking into account that $u$, $u_\eps$ and in turn $\bar u_\eps$ are uniformly bounded, we have $\abs{f(u)-f(\bar u_\eps)}\le C \abs{u-\bar u_\eps}$, then
\[
\norm{h_\eps}_{H^m(\Omega\cap B_{2r})}\le C\left(\norm{u-\bar u_\eps}_{H^{m}(\Omega\cap B_{2r})}+\norm{u_\eps}_{H^{m+2}(\Omega_\eps)}\norm{\gamma_\eps}_{\mathcal C^{m+2}}\right).
\]
Iterating the argument in \emph{Step 1}, we can find $C>0$ such that $\norm{u_\eps}_{H^{m+2}(\Omega_\eps)}\le C$ and
\[
\norm{u-\bar u_\eps}_{L^2(\Omega\cap B_{2r})}\le\norm{u- u_\eps}_{L^2(\Omega\cap B_{2r})}+C\norm{\bar u_\eps-u_\eps}_{L^\infty(\Omega\cap B_{2r})}\to0,\quad\text{for }\eps\to0,
\]
thanks to the compact embedding of $H^1(\Omega^\rho)$ in $L^2(\Omega^\rho)$. Then classical boundary regularity theory gives $\norm{u-\bar u_\eps}_{\mathcal C^2(\Omega\cap B_{r})}\to0$ for $\eps\to0$ and in turn
\[
\sup_{\overline{\Omega_\eps\cap\Omega\cap B_{r}}}\left| D^\alpha\big(u_\eps-u\big)\right|\to 0,\quad\text{for }\eps\to0,
\]
for all multi-indices $\alpha$, with $\abs{\alpha}\le2$. To complete the proof of~\ref{CAP4:convergenza} it is enough to repeat the argument for all $i=1,\dots,k$ and use interior regularity estimates taking into account that $\abs{-\Delta(u-u_\eps)}\le C\abs{u-u_\eps}$, for some $C>0$.
\end{proof}

\begin{rmk}
\label{rmk:hp:limitatezzanorma}
Here we show that if we assume that the nonlinearity has the form $f(u)=\lambda g(u)$, $g$ is smooth and satisfies~\ref{i2} and~\ref{i3}, that are
\begin{gather*}
g:\R\to\R\,\hbox{ is increasing and convex},\\
g(0)>0,
\end{gather*}
and $\lambda\in(0,\lambda^*(\Omega))$, then there exists $C>0$ such that $\norm{u_\eps}_{L^\infty(\Omega_\eps)}\le C$.

First of all note that, since $\Omega_\eps\to\Omega$ then $\lambda^*(\Omega_\eps)\to\lambda^*(\Omega)$ and then $\lambda<\lambda^*(\Omega_\eps)$ for $\eps$ small enough.

Remember that from the convergence of $\Omega_\eps$ to $\Omega$ we can find $\rho>1$ such that $\Omega_\eps\subseteq \Omega^\rho=\set{(x,y)\in\R^2| (x/\rho,y/\rho)\in\Omega}$. Moreover under this set of assumptions one has $\lambda^*(\Omega^\rho )>\lambda$ for $\eps$ small enough. Hence if we consider the stable solution $u^\rho $ of~\ref{CAP4:PB0} in $\Omega^\rho $ - using the convexity of $g$ - we have
\[
\begin{cases}
-\Delta(u_\eps-u^\rho )=\lambda \left(g(u_\eps)-g(u^\rho )\right)\le \lambda g'(u_\eps)(u_\eps-u^\rho )& \hbox{in}\ \Omega_\eps\\
u_\eps-u^\rho \le0& \hbox{on}\ \partial\Omega_\eps
\end{cases}
\]
and then from the stability of $u_\eps$ we can apply the maximum principle to deduce $u_\eps\le u^\rho \le\norm{u^\rho }_{L^\infty(\Omega^\rho )}$ in $\Omega_\eps$.\\
\end{rmk}
\newpage

\bibliographystyle{abbrv}
\bibliography{DCDSluca.bib}

\end{document}